\documentclass[11pt]{article}

\usepackage[utf8]{inputenc}

\usepackage{geometry}
\geometry{a4paper}

\usepackage{graphicx}

\usepackage{booktabs} 
\usepackage{array} 
\usepackage{paralist} 
\usepackage{verbatim} 
\usepackage{subfig} 

\usepackage{fancyhdr} 
\pagestyle{fancy} 

\lhead{}\chead{}\rhead{}
\lfoot{}\cfoot{\thepage}\rfoot{}

\usepackage[nottoc,notlof,notlot]{tocbibind}
\usepackage[titles,subfigure]{tocloft}

\usepackage{amsmath,amsfonts,amsthm,mathrsfs,amssymb,cite}
\usepackage[usenames]{color}

\newcommand{\R}{{\mathbb R}}

\newcommand{\N}{{\mathbb N}}
\newcommand{\C}{{\mathbb C}}

\newcommand{\be}{\begin{eqnarray}}
\newcommand{\ben}{\begin{eqnarray*}}
\newcommand{\en}{\end{eqnarray}}
\newcommand{\enn}{\end{eqnarray*}}

\newtheorem{thm}{Theorem}[section]

\newtheorem{lem}{Lemma}[section]

\theoremstyle{definition}

\theoremstyle{remark}

\newtheorem{rem}{Remark}[section]
\numberwithin{equation}{section}

\title{\bf Uniqueness in Determining Refractive Indices by Formally-determined Far-field Data}
\author {Guanghui Hu\thanks{Weierstrass Institute (WIAS), Mohrenstr. 39,
10117 Berlin,
Germany. Email: {\tt hu@wias-berlin.de}},\ \ \ Hongyu Liu\thanks{Department of Mathematics and Statistics, University of North Carolina, Charlotte, NC 28223, USA. }}

\date{} 

\begin{document}
\maketitle

\begin{abstract}

We present two uniqueness results for the inverse problem of determining an index of refraction by the corresponding acoustic far-field measurement encoded into the scattering amplitude. The first one is a local uniqueness in determining a variable index of refraction by the fixed incident-direction scattering amplitude. The inverse problem is formally posed with such measurement data. The second one is a global uniqueness in determining a constant refractive index by a single far-field measurement. The arguments are based on the study of certain nonlinear and non-selfadjoint interior transmission eigenvalue problems.
\end{abstract}

\section{Introduction}\label{sec:intro}

This note is concerned with the inverse acoustic scattering problem of recovering the {\it refractive index} of an inhomogeneous medium. Suppose in the homogeneous space, there is an inhomogeneity. In order to determine the inhomogeneity, one sends a certain detecting wave field. The propagation of the detecting wave field will be perturbed when meeting the inhomogeneity. The {\it perturbation} is the so-called {\it scattering} in the literature. The inverse problem is to recover the inhomogeneity by measuring the corresponding scattering wave field. The study of inverse scattering problems lies in the core of many areas of science and technology, such as radar and sonar, geophysical exploration, medical imaging, nondestructive testing, and remote sensing etc.; see \cite{AK1,AK2,CK,Isa,PiS02,Uhl} and the references therein.

Throughout, we shall take the incident field to be the time-harmonic acoustic plane wave of the form $u^i(x):=e^{ik x\cdot d}$, $x\in\mathbb{R}^N$ with $N\geq 2$, where $k\in\mathbb{R}_+$ is the wave number and $d\in\mathbb{S}^{N-1}$ is the impinging direction. The optical property of the homogeneous space is described an index of refraction $n(x)$, which is normalized to be $1$, whereas in an open domain $\Omega$ accommodating the inhomogeneity, it is assumed that $n(x)-1\equiv\hspace*{-3.5mm}\backslash\ 0$, $x\in\Omega$. We assume that $\Omega$ is a bounded Lipschitz domain with $\mathbb{R}^N\backslash\overline{\Omega}$ connected, and that the refractive index $n(x)\in L^\infty(\Omega)$ satisfying $\|n-1\|_{L^\infty(\Omega)}\geq \epsilon_0>0$. The wave propagation is governed by the following Helmholtz equation
\begin{equation}\label{eq:Helm}
\Delta u(x)+k^2 n^2(x) u(x)=0,\quad x\in\mathbb{R}^N,
\end{equation}
where $u(x)$ denotes the wave pressure. We seek a solution $u\in H_{loc}^1(\mathbb{R}^N)$ such that
\begin{equation}\label{eq:total}
u(x)=u^i(x)+u^s(x),\quad x\in\mathbb{R}^N\backslash\overline{\Omega}.
\end{equation}
$u^s(x)$ is called the {\it scattered wave field}, which satisfies the so-called {\it Sommerfeld radiation condition},
\begin{equation}\label{eq:radiation}
\lim_{|x|\rightarrow \infty} |x|^{\frac{N-1}{2}}\left\{ \frac{\partial u^s(x)}{\partial |x|}-ik u^s(x) \right\}=0.
\end{equation}
The well-posedness of the scattering system \eqref{eq:Helm}--\eqref{eq:radiation} is well understood (cf. \cite{Isa,LSSZ}). Particularly, $u^s$ admits the following asymptotic expansion
\begin{equation}\label{eq:farfield}
u^s(x)=\frac{e^{ik|x|}}{|x|^{\frac{N-1}{2}}} u_\infty(\hat x; d, k)+\mathcal{O}\left(\frac{1}{|x|^{\frac N 2}}\right),\quad |x|\rightarrow+\infty,
\end{equation}
which holds uniformly in all directions $\hat x:=x/|x|$, $x\in\mathbb{R}^N$. $u_\infty(\hat x; d, k)$ is known as the {\it scattering amplitude}. By the celebrated Rellich's Lemma (cf. \cite{CK}), the scattering amplitude $u_\infty(\hat x)$ encodes all the information of the scattered wave field $u^s(x)$. The inverse scattering problem that we consider in the present study is to recover the scatterer $(\Omega, n)$ by knowledge of $u_\infty(\hat x; d, k)$. If one introduces an operator $\mathcal{F}$ which sends the {\it scatterer} $(\Omega, n)$ to the corresponding scattering amplitude, then the inverse scattering problem can be abstractly formulated as the following operator equation
\begin{equation}\label{eq:oe}
\mathcal{F}(n)=u_\infty(\hat x; d, k).
\end{equation}
It is easy to verify that the operator equation \eqref{eq:oe} is nonlinear and also widely known to be ill-posed (cf. \cite{CK,Isa}).

One of the foundational issues in the inverse scattering problem is the uniqueness/identifiability: can one really identify the scatterer by the measurement? how many measurement data one should use for the identification? Mathematically, the uniqueness issue can be stated as follows. Let $n_1$ and $n_2$ be two refractive indices with the scattering amplitudes $u_\infty^1(\hat x; d, k)$ and $u_\infty^2(\hat x; d, k)$. Then
\begin{equation}\label{eq:uniqueness}
u_\infty^1(\hat x; d, k)=u_\infty^2(\hat x; d, k)\quad\mbox{if{f}}\quad n_1=n_2.
\end{equation}
The study on uniqueness is usually very difficult and challenging. Intensive efforts have been devoted to the unique determination of many inverse scattering problems and quite different technical treatments and mathematical theories are needed with different problems. Some significant developments have been achieved for several representative inverse scattering problems; see \cite{AK1,CK,Isa,LZ,U,Uhl} and the references therein. The classical uniqueness for the inverse problem \eqref{eq:oe} is due to Sylvester and Uhlmann (cf. \cite{SyU}; see also \cite{Nac,CK,Isa}). It is shown that in $\mathbb{R}^N$, $N\geq 3$, $u_\infty(\hat x; d, k)$ with a fixed $k\in\mathbb{R}_+$ and all $(\hat x, d)\in \mathbb{S}^{N-1}\times\mathbb{S}^{N-1}$ uniquely determines $n(x)$. Here, we note that the measurement data used are over-determined. Indeed, one can easily count the {\it dimensions} of the known (namely $u_\infty(\hat x; d, k)$) and the unknown (namely $n(x)$), are respectively $2N-2$ and $N$. By {\it dimension}, we mean the number of independent variables in a set. Clearly, one has $2N-2>N$ if $N\geq 3$. This motivates us to consider the unique determination of the refractive index function $n(x)$ by $u_\infty(\hat x; d, k)$ with a fixed $d\in\mathbb{S}^{N-1}$ and all $(\hat x, k)\in\mathbb{S}^{N-1}\times\mathbb{R}_+$. It is easily seen that the inverse scattering problem is formally posed with such measurement data. Since $u_\infty(\hat x; d, k)$ is (real) analytic with all its arguments and due to analytic continuation, the measurement data set could be replaced by $u_\infty(\hat x; d, k)$ for a fixed $d\in\mathbb{S}^{N-1}$, and all $(\hat x, k)\in\Lambda$, where $\Lambda$ is any open subset of $\mathbb{S}^{N-1}\times \mathbb{R}_+$. To our best knowledge, there is no uniqueness result in the literature by using such a formally-determined data set. Our study connects to that of the nonlinear and non-selfadjoint {\it interior transmission eigenvalue problems}, which was first introduced by Colton and Monk in \cite{CM}, and were recently extensively studied; see \cite{CC,CKP,PS,S} and the references therein. We conjecture that the uniqueness can be established in a very generic setting by using the formally-determined data set. However, in Section~\ref{sect:2}, we shall only present a local uniqueness result in determining a variable refractive index, by directly implementing the discreteness of interior transmission eigenvalues due to Sylvester in \cite{S}. Our main contribution in Section~\ref{sect:2} is to bridge the study on uniqueness of the inverse scattering problem and that on the interior transmission eigenvalue problems, which in our hope might open up some new directions of research in the relevant field. In Section~\ref{sect:3}, a much more interesting uniqueness result is derived in determining a constant refractive index by a single far-field measurement, namely $u_\infty(\hat x; d, k)$ with both $d$ and $k$ fixed, and all $\hat x\in\mathbb{S}^{N-1}$. This is based on showing that there is a lower bound for the positive interior transmission eigenvalues. The uniqueness by a single far-field measurement is extremely challenging for inverse scattering problems in the literature; see Section~\ref{sect:remark} for more discussion on this aspect.

\section{Local uniqueness in determining a variable refractive index}\label{sect:2}

Throughout, we let $n_*$ and $n^*$ be two positive constants such that $n_*<n^*$. Let $(\Omega, n_j)$, $j=1,2$, be two inhomogeneous media as described in Section~\ref{sec:intro} satisfying $n_*\leq n_j(x)\leq n^*$ for $x\in\Omega$. The scattering problem \eqref{eq:Helm}--\eqref{eq:radiation} corresponding to $(\Omega, n_j)$ can be easily formulated as the following transmission eigenvalue problem: find a pair of solutions $(u_j, u_j^s)\in H^1(\Omega)\times H_{loc}^1(\mathbb{R}^N\backslash\overline{\Omega})$ such that
\begin{equation}\label{eq:trans1}
\begin{cases}
\Delta u_j+k^2 n_j^2 u_j=0\hspace*{1cm} &\mbox{in\ \ $\Omega$},\\
\Delta u_j^s+k^2 u_j^s=0\hspace*{1cm} &\mbox{in\ \ $\mathbb{R}^N\backslash\overline{\Omega}$},\\
u_j=u_j^s+u^i,\ \ \frac{\partial u_j}{\partial \nu}=\frac{\partial u_j^s}{\partial \nu}+\frac{\partial u^i}{\partial\nu} &\mbox{on\ \ $\partial\Omega$},\\
\lim_{|x|\rightarrow \infty} |x|^{\frac{N-1}{2}}\left\{ \frac{\partial u_j^s(x)}{\partial |x|}-ik u_j^s(x) \right\}=0,
\end{cases}
\end{equation}
where $\nu$ denotes the exterior unit normal vector to $\partial\Omega$. We let $u_\infty^j(\hat x; d, k)$ be the scattering amplitude of \eqref{eq:trans1}. Suppose that for a fixed $d\in\mathbb{S}^{N-1}$ and a fixed $k\in\mathbb{R}_+$,
\begin{equation}\label{eq:equi}
u_\infty^1(\hat x; d, k)=u_\infty^2(\hat x; d, k)\quad\mbox{for\ \ $\hat x\in\mathbb{S}^{N-1}$}.
\end{equation}
Then by Rellich's Lemma (cf. \cite{CK}), we have
\begin{equation}\label{eq:equi2}
u^s_1(x)=u^s_2(x),\quad x\in\mathbb{R}^N\backslash\overline{\Omega},
\end{equation}
By \eqref{eq:trans1} and \eqref{eq:equi2}, it is easy to see that $(u_1, u_2)\in H^1(\Omega)\times H^1(\Omega)$ satisfies
\begin{equation}\label{eq:itep}
\begin{cases}
\Delta u_1+k^2 n_1^2 u_1=0\qquad & \mbox{in\ \ $\Omega$},\\
\Delta u_2+k^2 n_2^2 u_2=0\qquad & \mbox{in\ \ $\Omega$},\\
u_1=u_2,\ \ \frac{\partial u_1}{\partial\nu}=\frac{\partial u_2}{\partial\nu}\ & \mbox{on\ \ $\partial\Omega$}.
\end{cases}
\end{equation}
Next, we show that $(u_1, u_2)$ must be non-trivial solutions. It is easily seen that if either one of $u_1$ and $u_2$ is a trivial solution, then both of them are trivial solutions to \eqref{eq:itep}. Without loss of generality, we suppose that $u_1=0$. Then by \eqref{eq:trans1}, we have that
\[
 u^i+u_1^s=\partial (u^i+u_1^s)/\partial\nu=0 \quad\mbox{on}\quad \partial\Omega. 
 \]
 Hence, by Holmgren's Theorem (cf. \cite{CK}), one must have that $u^i+u_1^s=0$ in $\mathbb{R}^N\backslash\overline{\Omega}$, which contradicts with the fact that
\[
\lim_{|x|\rightarrow\infty}|u_1^s(x)+e^{ikx\cdot d}|=1.
\]
Hence, $(u_1, u_2)$ is a pair of nontrivial solutions to \eqref{eq:itep}. According to \cite{CM}, $k$ is {\it an interior transmission eigenvalue} to \eqref{eq:itep} with $u_1$ and $u_2$ being the corresponding eigenfunctions.

Based on the above observation, one can show the uniqueness \eqref{eq:uniqueness} with a fixed $d\in\mathbb{S}^{N-1}$ and all $(\hat x, k)\in\mathbb{S}^{N-1}\times\mathbb{R}_+$ by absurdity as follows. If $u^1_\infty(\hat x; d, k)=u^2_\infty(\hat x; d, k)$ for all $(\hat x, k)\in\mathbb{S}^{N-1}\times\mathbb{R}_+$, then by a similar argument to derive \eqref{eq:itep}, we know every $k\in\mathbb{R}_+$ is an interior transmission eigenvalue with $u^1(x; d, k)$ and $u^2(x; d, k)$ being the corresponding eigenfunctions. If one can show that the interior transmission eigenvalues to \eqref{eq:itep} for certain admissible $n_1$ and $n_2$ are discrete, then one obviously arrives at a contradiction. However, such discreteness of the interior transmission eigenvalues for \eqref{eq:itep} is only available for certain restricted $n_1$ and $n_2$ in the literature. The connection discussed above obviously bridges the study on uniqueness of the inverse scattering problem and that on the interior transmission eigenvalue problems, which in our hope might open up some new directions of research in the relevant field.

Next, we present a local uniqueness result in determining a refractive index by using the fixed-incident-direction scattering amplitude.

\begin{thm}\label{thm:1}
Let $(\Omega, n)$ be an inhomogeneous medium with $n_*<n(x)<n^*$ for $x\in\Omega$ and let $\varepsilon(x)\in L^\infty(\Omega)$. Let $u_\infty$ and $v_\infty$ denote the scattering amplitudes corresponding to $(\Omega, n)$ and $(\Omega, n+\varepsilon)$, respectively. If there exisit two positive constants $\epsilon^+$ and $\epsilon^-$, and a neighborhood of $\partial \Omega$, $neigh(\partial\Omega)$, such that $\epsilon^+<\epsilon^-<n_*$, $\varepsilon(x)\geq -\epsilon^-$ for $x\in\Omega$, and either $\varepsilon(x)\geq \epsilon_+$ or $\varepsilon(x)\leq -\epsilon_+$ for $x\in neigh(\partial\Omega)$.Then one cannot have that
\begin{equation}\label{eq:local u1}
u_\infty(\hat x; d, k)=v_\infty(\hat x; d, k)
\end{equation}
for any fixed $d\in\mathbb{S}^{N-1}$, and all $(\hat x, k)\in\mathbb{S}^{N-1}\times\mathbb{R}_+$.
\end{thm}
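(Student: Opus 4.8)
The plan is to argue by contradiction, following exactly the bridge established in the preceding discussion: if \eqref{eq:local u1} holds for a fixed $d$ and all $(\hat x,k)\in\mathbb{S}^{N-1}\times\mathbb{R}_+$, then by Rellich's Lemma and the computation leading to \eqref{eq:itep}, every $k\in\mathbb{R}_+$ is an interior transmission eigenvalue of the problem associated with the pair of indices $(n, n+\varepsilon)$, with nontrivial eigenfunctions $(u^1(\cdot;d,k), u^2(\cdot;d,k))$. In particular the set of interior transmission eigenvalues for this pair would contain the whole positive real axis, hence would not be discrete. So the task reduces to showing that, under the stated sign and size conditions on $\varepsilon$, the interior transmission eigenvalues for $(n, n+\varepsilon)$ \emph{are} discrete, which directly yields the desired contradiction.

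To get that discreteness I would invoke Sylvester's result from \cite{S}, which establishes discreteness of interior transmission eigenvalues under a suitable sign/contrast condition holding in a neighborhood of the boundary $\partial\Omega$. Concretely, setting $n_1 = n$ and $n_2 = n+\varepsilon$, the relevant contrast is $n_2^2 - n_1^2 = \varepsilon(2n+\varepsilon)$. The hypotheses $-\epsilon^- \le \varepsilon$ on all of $\Omega$ and $\epsilon^- < n_*$ guarantee that $n+\varepsilon > n_* - \epsilon^- > 0$, so $2n+\varepsilon > 0$ everywhere and the sign of the contrast $\varepsilon(2n+\varepsilon)$ equals the sign of $\varepsilon$. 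The further assumption that on $neigh(\partial\Omega)$ one has either $\varepsilon \ge \epsilon^+$ or $\varepsilon \le -\epsilon^+$ then forces the contrast to be bounded away from zero with a fixed sign in that neighborhood; this is precisely the near-boundary nonvanishing-with-definite-sign condition that \cite{S} requires to conclude discreteness of the transmission eigenvalues. I would spell out the verification that our $(n_1,n_2)$ fall into the admissible class of \cite{S} — checking the regularity ($L^\infty$ suffices in Sylvester's framework), the positivity $n_1,n_2 > 0$, and the one-signed contrast near $\partial\Omega$ — and then quote the discreteness conclusion verbatim.

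Having the discreteness, the contradiction is immediate: a discrete subset of $\mathbb{R}_+$ cannot equal $\mathbb{R}_+$, yet we showed every $k\in\mathbb{R}_+$ is an eigenvalue. One should also double-check the non-triviality step in this $\varepsilon$-perturbed setting — but that is verbatim the Holmgren/Rellich argument already given before Theorem~\ref{thm:1}: if the interior field vanished for some $k$, then $u^i + u^s$ would have vanishing Cauchy data on $\partial\Omega$, hence vanish in the exterior, contradicting $\lim_{|x|\to\infty}|u^s(x)+e^{ikx\cdot d}| = 1$. Thus nontrivial eigenfunctions are guaranteed at every $k$, closing the loop.

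The main obstacle, and the only substantive point, is matching our hypotheses to the exact admissibility conditions in \cite{S}: Sylvester's theorem is stated for a particular normalization of the transmission eigenvalue problem (often with the contrast $n^2-1$, or with the roles of the two media arranged in a specific way), so I would need to be careful about which of $n$ and $n+\varepsilon$ plays the role of the ``background'' in his formulation, and whether his near-boundary condition is on $n_2^2/n_1^2 - 1$ or on $n_1^2 - n_2^2$ having a definite sign. The conditions $\epsilon^+ < \epsilon^- < n_*$ are evidently tailored so that, after this bookkeeping, the contrast is strictly one-signed and bounded away from zero on $neigh(\partial\Omega)$ while $n+\varepsilon$ stays uniformly positive on all of $\Omega$; the routine part is just propagating these inequalities through the algebra of $n_2^2 - n_1^2 = \varepsilon(2n+\varepsilon)$. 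Everything else — Rellich, Holmgren, the far-field normalization — is already in place in the text preceding the theorem.
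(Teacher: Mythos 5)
Your proposal is correct and follows essentially the same route as the paper: assume \eqref{eq:local u1}, conclude via the preceding Rellich/Holmgren discussion that every $k\in\mathbb{R}_+$ is an interior transmission eigenvalue of \eqref{eq:local u2}, and contradict this with Sylvester's discreteness theorem from \cite{S} under the stated sign conditions on $\varepsilon$ near $\partial\Omega$. The only difference is that you spell out the verification of the contrast condition $n_2^2-n_1^2=\varepsilon(2n+\varepsilon)$ being one-signed near the boundary, which the paper simply asserts by referring to its hypotheses.
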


\begin{proof}
Assume that \eqref{eq:local u1} holds for a fixed $d\in\mathbb{S}^{N-1}$ and all $(\hat x, k)\in\mathbb{S}^{N-1}\times \mathbb{R}_+$. Then, according to our earlier discussion, we know that every $k\in\mathbb{R}_+$ is an interior transmission eigenvalue to
\begin{equation}\label{eq:local u2}
\begin{cases}
\Delta u+k^2 n^2 u=0\qquad &\mbox{in\ \ $\Omega$},\\
\Delta v+k^2(n+\varepsilon)^2 v=0\qquad &\mbox{in\ \ $\Omega$},\\
u=v,\ \ \frac{\partial u}{\partial\nu}=\frac{\partial v}{\partial\nu} \ \ &\mbox{on\ \ $\partial\Omega$}.
\end{cases}
\end{equation}
However, in \cite{S}, it is shown that provided $n$ and $\varepsilon$ satisfy the assumptions stated in the theorem, the interior transmission eigenvalue problem \eqref{eq:local u2} possesses (at most) a discrete set of eigenvalues, which immediately yields a contradiction.

The proof is completed.
\end{proof}

\section{Uniqueness in determining a constant refractive index}\label{sect:3}

In this section, we shall prove that an inhomogeneous medium with a constant refractive index can be uniquely determined by a single far-field measurement. Indeed, we have

\begin{thm}\label{thm:onem}
Let $(\Omega, n)$ be an inhomogeneous medium with the refractive index $n$ being a complex-valued constant such that $\Im n\geq 0$ and $0< |n|\leq n^*$, and let $u_\infty(\hat x; d, k)$ be the associated scattering amplitude. Then there exists a positive constant $k_0$, depending only on $n^*$ and $\Omega$, such that $n$ is uniquely determined by $u_\infty(\hat x; d, k)$ with any fixed $0<k<k_0$, $d\in\mathbb{S}^{N-1}$, and all $\hat x\in\mathbb{S}^{N-1}$.
\end{thm}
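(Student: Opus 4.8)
The plan is to argue by contradiction along the same lines as in Section~\ref{sect:2}. Suppose $n_1$ and $n_2$ are two constant refractive indices, both satisfying $\Im n_j\geq 0$ and $0<|n_j|\leq n^*$, that produce the same far-field pattern $u_\infty(\hat x;d,k)$ for a single fixed pair $(d,k)$ with $0<k<k_0$ and all $\hat x\in\mathbb{S}^{N-1}$. By Rellich's Lemma the scattered fields coincide in $\mathbb{R}^N\backslash\overline{\Omega}$, and by the transmission conditions the interior fields $u_1,u_2\in H^1(\Omega)$ satisfy the interior transmission system
\begin{equation}\label{eq:onem-itep}
\begin{cases}
\Delta u_1+k^2 n_1^2 u_1=0 &\mbox{in }\Omega,\\
\Delta u_2+k^2 n_2^2 u_2=0 &\mbox{in }\Omega,\\
u_1=u_2,\ \ \partial_\nu u_1=\partial_\nu u_2 &\mbox{on }\partial\Omega,
\end{cases}
\end{equation}
with $(u_1,u_2)$ nontrivial by exactly the Holmgren argument used for \eqref{eq:itep}. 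So $k$ is a (possibly complex, since $n_j$ may be complex) interior transmission eigenvalue of \eqref{eq:onem-itep}. The entire proof then reduces to establishing a positive lower bound $k_0=k_0(n^*,\Omega)$ below which \eqref{eq:onem-itep} has no interior transmission eigenvalue \emph{unless} $n_1=n_2$; this is the ``lower bound for the positive interior transmission eigenvalues'' alluded to in the introduction.

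The key step is thus to derive such a lower bound, and here the fact that $n_1,n_2$ are \emph{constants} makes the problem tractable. Setting $w:=u_1-u_2\in H^1(\Omega)$, from \eqref{eq:onem-itep} one computes $\Delta w=-k^2 n_1^2 u_1+k^2 n_2^2 u_2 = -k^2 n_1^2 w + k^2(n_2^2-n_1^2)u_2$, so $w$ solves $(\Delta+k^2 n_1^2)w=k^2(n_2^2-n_1^2)u_2$ in $\Omega$ with $w=\partial_\nu w=0$ on $\partial\Omega$. The plan is to multiply this identity by $\overline{u_2}$ (or by $\overline w$) and integrate over $\Omega$, integrating by parts and using the Cauchy data of $w$ vanishing to kill all boundary terms, thereby obtaining an integral identity of the form
\begin{equation}\label{eq:onem-id}
k^2(n_1^2-n_2^2)\int_\Omega |u_2|^2\,dx = \int_\Omega\bigl(\nabla w\cdot\nabla\overline{u_2}-k^2 n_1^2\, w\,\overline{u_2}\bigr)\,dx,
\end{equation}
and a companion identity with the roles of $1$ and $2$ interchanged, plus the Rellich/Poincar\'e-type estimate $\|w\|_{L^2(\Omega)}\leq C(\Omega)\|\nabla w\|_{L^2(\Omega)}$ valid for $H^1_0$ functions together with an elliptic energy bound $\|\nabla w\|_{L^2(\Omega)}\leq C k^2 |n_1^2-n_2^2|\,\|u_2\|_{L^2(\Omega)}\bigl(1+k^2|n_1|^2 C(\Omega)^2\bigr)$ obtained by testing with $\overline w$ itself. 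Combining these, one gets a bound of the shape $|n_1^2-n_2^2|\int_\Omega|u_2|^2 \leq C(\Omega)\,k^2\,\bigl(1+(n^*)^2\bigr)^2\,|n_1^2-n_2^2|\int_\Omega|u_2|^2$ after also controlling $\|u_2\|$ against $\|w\|$ symmetrically; choosing $k_0$ so that the constant on the right is strictly less than $1$ for $k<k_0$ forces either $n_1^2=n_2^2$ or $u_2\equiv 0$. In the former case $\Im n_j\geq 0$ and $|n_j|>0$ pin down $n_1=n_2$ (the two square roots of $n^2$ in the closed upper half-plane with positive real... one uses that $n_j$ lies in the physically admissible region to select the branch); in the latter case $u_2\equiv 0$ propagates to $u_1\equiv 0$ by the transmission conditions, contradicting nontriviality.

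The main obstacle I anticipate is making the energy estimates genuinely \emph{symmetric} and \emph{sharp enough} that the threshold $k_0$ depends only on $n^*$ and $\Omega$ and not on the unknown interior fields: a naive estimate bounds $\|w\|$ in terms of $\|u_2\|$ but one then needs to feed this back to bound $\|u_2\|$ in terms of $\|w\|$ — for that one runs the same computation with $w$ tested against $\overline{u_1}$ and exploits $u_1=u_2+w$ to close the loop, or alternatively one works directly with the pair $(u_1,u_2)$ and the quantity $\|u_1\|^2+\|u_2\|^2$. A secondary point requiring care is that $n_j$ being complex means $k$ could a priori be a complex eigenvalue; the integral identities above must be written with complex conjugates throughout and one extracts the needed inequality by taking absolute values, which is why the argument controls $|n_1^2-n_2^2|$ rather than a signed quantity. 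Finally, the Holmgren-type argument for nontriviality of $(u_1,u_2)$, and the unique-continuation / interior-regularity facts used (both standard for the Helmholtz operator with constant coefficients, cf. \cite{CK}), should be invoked but not belabored.
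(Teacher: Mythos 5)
Your proposal is correct and takes essentially the same route as the paper: reduce to the interior transmission problem with nontrivial eigenfunctions via Rellich and Holmgren, note that $w=u_1-u_2$ has vanishing Cauchy data and solves an inhomogeneous Helmholtz equation with source $k^2(n_2^2-n_1^2)u_2$, combine the resulting integration-by-parts identity (boundary terms killed by the zero Cauchy data) with a Poincar\'e-based low-frequency estimate for $w$ (the paper's Lemma~\ref{lem:auxi}), and choose $k_0$ small depending only on $n^*$ and $\Omega$. The only cosmetic difference is that the paper normalizes $w:=(u-v)/\bigl(k^2(\widetilde n^2-n^2)\bigr)$ so that the factor $|n_1^2-n_2^2|$ cancels automatically instead of being divided out at the end, and your worry about ``symmetrically'' bounding $\|u_2\|$ by $\|w\|$ is unnecessary since the displayed chain already closes.
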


\begin{rem}
In Theorem~\ref{thm:onem}, we allow $n$ to be complex-valued. $\Im n$ is known as absorbing or damping coefficient of the acoustic medium $(\Omega, n)$.
\end{rem}

\begin{proof}
As before, we shall prove the theorem by absurdity. Let $(\Omega, \widetilde n)$ be another inhomogeneous medium with the refractive index $\widetilde n$ being a constant such that $\Im \widetilde n\geq 0$, $0<|\widetilde n|\leq n^*$, $\widetilde n\neq n$ and
\begin{equation}\label{eq:bb}
u_\infty(\hat x; d, k)=v_\infty(\hat x; d, k)\quad \mbox{for fixed $k>0$ and $d\in\mathbb{S}^{N-1}$, and all $\hat x\in\mathbb{S}^{N-1}$ },
\end{equation}
where $v_\infty(\hat x; d, k)$ denote the scattering amplitude corresponding to $(\Omega, \widetilde n)$. By a similar argument to that in deriving \eqref{eq:itep}, we know $(u, v)\in H^1(\Omega)\times H^1(\Omega)$ is a pair of interior transmission eigenfunctions to the following system
\begin{equation}
\begin{cases}
\Delta u+k^2 n^2 u=0\qquad &\mbox{in\ \ $\Omega$},\\
\Delta v+k^2 \widetilde n^2 v=0\qquad &\mbox{in\ \ $\Omega$},\\
u=v,\ \ \frac{\partial u}{\partial \nu}=\frac{\partial v}{\partial \nu}\ &\mbox{on\ \ $\partial\Omega$}.
\end{cases}
\end{equation}
Set
\[
w:=\frac{u-v}{k^2(\widetilde n^2-n^2)}.
\]
It is straightforward to show that $(w,u)\in H^1(\Omega)\times H^1(\Omega)$ satisfy
\begin{equation}\label{eq:bb2}
\begin{cases}
\Delta w+k^2 n^2 w=v\qquad &\mbox{in\ \ $\Omega$},\\
\Delta v+k^2\widetilde n^2 v=0\qquad &\mbox{in\ \ $\Omega$},\\
w=0,\ \ \frac{\partial w}{\partial\nu}=0\ &\mbox{on\ \ $\partial \Omega$}.
\end{cases}
\end{equation}

Next, multiplying both sides of the first equation in \eqref{eq:bb2} by $\overline{v}$ and then integrating over $\Omega$, we have
\begin{align}
\int_{\Omega}|v|^2\ dx=& \int_{\Omega}(\Delta w+k^2 n^2 w)\cdot \overline{v}\ dx\label{eq:t1}\\
=&\int_{\Omega} (\Delta w+k^2 n^2 w)\cdot\overline{v}\ dx-\int_{\Omega}(\Delta\overline{v}+k^2{\overline{\widetilde n}}^2\overline{v})\cdot w\ dx\label{eq:t2}\\
=&\int_{\partial \Omega}\left(\frac{\partial w}{\partial \nu}\cdot \overline{v}-\frac{\partial \overline{v}}{\partial \nu}\cdot w \right)ds(x)+k^2(n^2-{\overline{\widetilde n}}^2)\int_{\Omega}\overline{v}\cdot w\ dx\label{eq:t3}\\
=& k^2(n^2-{\overline{\widetilde n}}^2)\int_{\Omega}\overline{v}\cdot w\ dx.\label{eq:t4}
\end{align}
From \eqref{eq:t1} to \eqref{eq:t2}, we have made use of the second equation in \eqref{eq:bb2}; from \eqref{eq:t2} to \eqref{eq:t3}, we have made use of Green's formula; and from \eqref{eq:t3} to \eqref{eq:t4}, we have made use of the homogeneous boundary conditions for $w$ in \eqref{eq:bb2}. Now, by \eqref{eq:t1}--\eqref{eq:t4}, we clearly have
\begin{equation}\label{eq:bb3}
\int_{\Omega}|v|^2\ dx\leq 2k^2{n^*}^2\|v\|_{L^2(\Omega)}\|w\|_{L^2(\Omega)}.
\end{equation}
By Lemma~\ref{lem:auxi} in the following, we know that provided $k_0$ is sufficiently small,
\begin{equation}\label{eq:bb4}
\|w\|_{L^2(\Omega)}\leq C(\Omega)\|v\|_{L^2(\Omega)},
\end{equation}
where $C(\Omega)$ is a positive constant depending only on $\Omega$. Finally, we further require that
\begin{equation}\label{eq:bb5}
k_0<\frac{1}{\sqrt{2C(\Omega)}\;\;n^*}.
\end{equation}
Then, by \eqref{eq:bb3}--\eqref{eq:bb5}, it is straightforward to show that $\|v\|_{L^2(\Omega)}=0$, which immediately yields a contradiction.

The proof is completed.
\end{proof}

\begin{lem}\label{lem:auxi}
Let $f\in L^2(\Omega)$ and $w\in H^1(\Omega)$ satisify
\begin{equation}\label{eq:cc1}
\Delta w+k^2 w=f\ \ \ \mbox{in\ \ $\Omega$},\ \ w=0\ \ \mbox{on\ \ $\partial\Omega$}.
\end{equation}
Then there exist positive constants $k_0=k_0(\Omega)$ and $C=C(\Omega)$ such that 
\begin{equation}\label{eq:cc2}
\|w\|_{L^2(\Omega)}\leq C(\Omega) \|f\|_{L^2(\Omega)},\quad\mbox{for all}\quad k<k_0(\Omega).
\end{equation}
\end{lem}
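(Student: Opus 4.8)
The plan is to prove the resolvent-type estimate \eqref{eq:cc2} by a contradiction/compactness argument exploiting the fact that the Dirichlet Laplacian on a bounded domain has no eigenvalue at $0$, together with a small-$k$ perturbation. Concretely, suppose the estimate fails: then for every $n\in\mathbb{N}$ there exist $k_n\to 0$, $f_n\in L^2(\Omega)$ and $w_n\in H^1(\Omega)$ solving \eqref{eq:cc1} with $\|w_n\|_{L^2(\Omega)}=1$ and $\|f_n\|_{L^2(\Omega)}\to 0$. Testing the equation against $w_n$ and integrating by parts (the boundary term vanishes since $w_n\in H_0^1(\Omega)$) gives $\int_\Omega|\nabla w_n|^2\,dx = k_n^2\int_\Omega|w_n|^2\,dx - \int_\Omega f_n\overline{w_n}\,dx$, so $\|\nabla w_n\|_{L^2(\Omega)}^2 \le k_n^2 + \|f_n\|_{L^2(\Omega)}$, which is bounded. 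Hence $\{w_n\}$ is bounded in $H_0^1(\Omega)$.

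Next I would extract a subsequence (not relabeled) with $w_n\rightharpoonup w$ weakly in $H_0^1(\Omega)$ and, by the Rellich–Kondrachov compact embedding $H_0^1(\Omega)\hookrightarrow\hookrightarrow L^2(\Omega)$, strongly in $L^2(\Omega)$; in particular $\|w\|_{L^2(\Omega)}=1$, so $w\neq 0$. Passing to the limit in the weak formulation $\int_\Omega \nabla w_n\cdot\nabla\overline{\varphi}\,dx = k_n^2\int_\Omega w_n\overline{\varphi}\,dx - \int_\Omega f_n\overline{\varphi}\,dx$ for every test function $\varphi\in H_0^1(\Omega)$, and using $k_n\to 0$, $f_n\to 0$ in $L^2(\Omega)$, and weak convergence of the gradients, yields $\int_\Omega \nabla w\cdot\nabla\overline{\varphi}\,dx = 0$ for all $\varphi\in H_0^1(\Omega)$. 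Taking $\varphi=w$ forces $\nabla w=0$, hence $w$ is constant, and being in $H_0^1(\Omega)$ it must vanish, contradicting $\|w\|_{L^2(\Omega)}=1$. This contradiction establishes the existence of $k_0(\Omega)$ and $C(\Omega)$ as claimed.

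The only delicate point — and the main thing to make sure of — is the regularity/trace issue: the lemma states $w\in H^1(\Omega)$ with $w=0$ on $\partial\Omega$, and since $\Omega$ is only Lipschitz, the boundary condition is interpreted in the trace sense, which is exactly what it means to say $w\in H_0^1(\Omega)$; this is what makes the integration by parts legitimate with no boundary contribution. (One should also note that $\varepsilon^2$ in the application, i.e. the coefficient $k^2n^2$ versus $k^2$, is harmless: the lemma is stated for $\Delta w+k^2 w=f$, and in \eqref{eq:bb2} the relevant equation has coefficient $k^2n^2$ with $|n|\le n^*$, so one simply applies the lemma with $k$ replaced by $k|n|\le k n^*$, shrinking $k_0$ by a factor of $n^*$ if necessary.) No quantitative control on $C(\Omega)$ is needed — only its existence and dependence on $\Omega$ alone — so the soft compactness argument suffices and is in fact the cleanest route; an alternative would be to invoke directly that for $k^2$ below the first Dirichlet eigenvalue $\lambda_1(\Omega)$ the operator $\Delta+k^2$ is boundedly invertible from $L^2$ to $H_0^1\cap H^2$ (or to the form domain) with norm controlled by $(\lambda_1(\Omega)-k^2)^{-1}$, giving the explicit choice $k_0=\sqrt{\lambda_1(\Omega)}/2$ and $C(\Omega)=2/\lambda_1(\Omega)$, but the contradiction argument avoids any appeal to elliptic regularity on Lipschitz domains.
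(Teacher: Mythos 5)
Your proof is correct, but it takes a genuinely different route from the paper's. You argue by contradiction and compactness: normalize a putative sequence of counterexamples, get an $H_0^1(\Omega)$ bound from the energy identity, extract a weak $H_0^1$ / strong $L^2$ limit via Rellich--Kondrachov, and show the limit is a nonzero element of $H_0^1(\Omega)$ with vanishing Dirichlet energy, a contradiction. The paper instead proves the estimate directly and quantitatively: it tests the equation with $\overline{w}$, applies the Poincar\'e inequality $\|w\|_{L^2}^2\leq C_1(\Omega)\|\nabla w\|_{L^2}^2$ and Young's inequality to the term $\int_\Omega f\overline{w}$, and then chooses $k_0=\frac{1}{2\sqrt{C_1(\Omega)}}$, $\alpha=k_0^2$, obtaining the explicit constants $C(\Omega)=\sqrt{2}\,C_1(\Omega)$. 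Your soft argument buys brevity and avoids tracking constants, and it suffices for Theorem~3.2 since only the existence of $C(\Omega)$, depending on $\Omega$ alone, enters the smallness condition \eqref{eq:bb5}; the paper's route buys explicit, in principle computable values of $k_0$ and $C(\Omega)$, and your own remark about taking $k_0=\sqrt{\lambda_1(\Omega)}/2$ is essentially the paper's argument in disguise, since the optimal Poincar\'e constant is $1/\lambda_1(\Omega)$. Your handling of the trace issue ($w\in H^1$ with zero trace means $w\in H_0^1$ on a Lipschitz domain) is the right justification for the boundary-term-free integration by parts, which the paper uses silently.

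One small caveat on your parenthetical about the application: in \eqref{eq:bb2} the coefficient is $k^2n^2$ with $n$ possibly complex, so one cannot literally ``replace $k$ by $k|n|$'' and invoke the lemma as stated, since the equation is then not of the form $\Delta w+\kappa^2 w=f$ with real $\kappa$. The correct fix is to rerun the energy estimate taking real parts, using $\Re(k^2n^2)\leq k^2|n|^2\leq k^2{n^*}^2$, which is exactly why the paper's bound \eqref{eq:bb3} carries the factor ${n^*}^2$; this is a gloss in the paper as well, not a defect of your proof of the lemma itself.
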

\begin{proof}
By integrating by parts, we first have
\begin{equation}\label{eq:cc3}
\begin{split}
&\int_{\Omega} f\cdot\overline{w}\ dx
=\int_{\Omega} (\Delta w+k^2 w)\cdot \overline{w}\ dx\\
=&-\int_{\Omega}|\nabla w|^2\ dx+k^2\int_{\Omega}|w|^2\ dx.
\end{split}
\end{equation}
Next, by Poincar\'e inequality, we have
\begin{equation}\label{eq:cc4}
\int_{\Omega} |w|^2\ dx\leq C_1(\Omega)\int_{\Omega}|\nabla w|^2\ dx,
\end{equation}
where $C_1(\Omega)$ is a positive constant depending only on $\Omega$. Then, by \eqref{eq:cc3} and \eqref{eq:cc4}, we further have
\begin{equation}\label{eq:cc5}
\begin{split}
\frac{1}{C_1(\Omega)}\int_{\Omega}|w|^2\ dx\leq & \int_{\Omega}|\nabla w|^2\ dx\leq k^2\int_{\Omega}|w|^2\ dx+|\int_{\Omega} f\cdot\overline{w}\ dx|\\
\leq & k^2\int_{\Omega}|w|^2 \ dx+\alpha\int_{\Omega} |w|^2\ dx+\frac{1}{4\alpha} \int_{\Omega} |f|^2\ dx\\
\leq & k_0^2\int_{\Omega}|w|^2 \ dx+\alpha\int_{\Omega} |w|^2\ dx+\frac{1}{4\alpha} \int_{\Omega} |f|^2\ dx,
\end{split}
\end{equation}
where $\alpha\in\mathbb{R}_+$. By choosing
\begin{equation}\label{eq:cc6}
k_0=\frac{1}{2\sqrt{C_1(\Omega)}}
\end{equation}
and letting $\alpha=k_0^2$ in \eqref{eq:cc5}, we can compute directly that
\begin{equation}\label{eq:cc7}
\int_{\Omega}|w|^2\ dx\leq 2C_1(\Omega)^2\int_{\Omega}|f|^2\ dx.
\end{equation}
Therefore, the lemma is proved by taking $C(\Omega)=\sqrt{2}C_1(\Omega)$.

\end{proof}

In the rest of this section, we present a uniqueness result in determining a spherically symmetric refractive index by a single far-field measurement without the smallness condition on $k$ in Theorem~\ref{thm:onem}.

\begin{thm}\label{Th}
Let $(\Omega, n)$ be an inhomogeneous medium with a constant refractive index $n\neq 0$. Suppose further that $\Omega=B_R(z):=\{x\in\R^N: |x-z|=R\}$ is a ball centered at $z\in\R^N$ with radius $R>0$. Then, the ball (that is, $R$ and $z$) and its refractive index $n$ can be uniquely determined by the far-field pattern $u_\infty(\hat{x}; d, k)$ for all $\hat{x}\in \mathbb{S}^{N-1}$ with any fixed incident direction $d\in \mathbb{S}^{N-1}$ and wave number $k\in \R_+$.
\end{thm}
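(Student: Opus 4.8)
\medskip
\noindent\emph{Proof strategy.}
The plan is to argue by contradiction, separating the recovery of the ball from that of the constant index. Suppose $(B_{R_1}(z_1),n_1)$ and $(B_{R_2}(z_2),n_2)$ are two such media producing the same pattern $u_\infty(\hat x;d,k)$ for all $\hat x\in\mathbb{S}^{N-1}$ at the given $d$ and $k$. We may assume $u_\infty\not\equiv 0$, since by Rellich's lemma an identically vanishing far-field pattern forces the scattered field to vanish in the exterior, i.e. the medium is invisible and carries no geometric information; discarding that degenerate case we have $n_1^2\neq 1$ and $n_2^2\neq 1$ (because $n^2=1$ makes the medium invisible).

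\emph{Step 1 (the ball).} I claim $B_{R_1}(z_1)=B_{R_2}(z_2)$. If not, then as both are balls one of the two spheres protrudes from the other closed ball (otherwise $|z_1-z_2|+R_1\le R_2$ and $|z_1-z_2|+R_2\le R_1$, forcing the two to coincide); after possibly swapping the two media, pick $x_0\in\partial B_{R_1}(z_1)\setminus\overline{B_{R_2}(z_2)}$ and $\delta>0$ with $\overline{B_\delta(x_0)}\cap\overline{B_{R_2}(z_2)}=\emptyset$. By Rellich's lemma $u^s_1=u^s_2$ on the connected unbounded complement of $\overline{B_{R_1}(z_1)\cup B_{R_2}(z_2)}$; since $u^s_2$ solves the homogeneous Helmholtz equation throughout $B_\delta(x_0)$ it is real-analytic there and provides a real-analytic extension of $u^s_1$ from $B_\delta(x_0)\setminus\overline{B_{R_1}(z_1)}$ to all of $B_\delta(x_0)$, still a solution of $\Delta(\cdot)+k^2(\cdot)=0$. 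Writing $\widetilde u_1:=u^i+u^s_1$ for this extension, the transmission conditions in \eqref{eq:trans1} give $u_1=\widetilde u_1$ and $\partial_\nu u_1=\partial_\nu\widetilde u_1$ on $\partial B_{R_1}(z_1)\cap B_\delta(x_0)$, so Holmgren's theorem forces $u_1=\widetilde u_1$ in a one-sided neighbourhood inside $B_{R_1}(z_1)$. There $\widetilde u_1$ satisfies both $\Delta\widetilde u_1+k^2 n_1^2\widetilde u_1=0$ and $\Delta\widetilde u_1+k^2\widetilde u_1=0$, hence $(n_1^2-1)\widetilde u_1\equiv 0$ and thus $\widetilde u_1\equiv 0$ near $x_0$; by analyticity of $u^i+u^s_1$ in the connected exterior of $B_{R_1}(z_1)$ this gives $u^s_1\equiv -u^i$ in $\R^N\setminus\overline{B_{R_1}(z_1)}$, contradicting $\lim_{|x|\to\infty}|u^s_1(x)+e^{ikx\cdot d}|=1$ exactly as in Section~\ref{sect:2}. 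Hence $z_1=z_2=:z$, $R_1=R_2=:R$, and $\Omega=B_R(z)$ is identified.

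\emph{Step 2 (reduction to scattering coefficients).} Translating by $-z$ we may take $z=0$; the common translation phase $e^{ikz\cdot(d-\hat x)}$ cancels when the two patterns are compared. On $B_R(0)$ the scattering solution for a constant index $m\in\{n_1,n_2\}$ is the classical separation-of-variables (Mie-type) series: with $j_\ell$ the radial function regular at $0$ and $h_\ell$ the outgoing one for the $\ell$-th radial Helmholtz equation in $\R^N$, matching the field and its radial derivative at $r=R$ yields
\[
u_\infty(\hat x;d,k)=\sum_{\ell\ge 0}c_{N,\ell}\,\Gamma_\ell(m,R,k)\,G_\ell(\hat x\cdot d),\qquad
\Gamma_\ell(m,R,k)=\frac{m\,j_\ell'(mkR)\,j_\ell(kR)-j_\ell(mkR)\,j_\ell'(kR)}{j_\ell(mkR)\,h_\ell'(kR)-m\,j_\ell'(mkR)\,h_\ell(kR)},
\]
where $G_\ell$ is the degree-$\ell$ zonal polynomial (Gegenbauer, resp. Chebyshev when $N=2$) from the addition theorem for spherical harmonics and $c_{N,\ell}\neq 0$ are universal constants (from the plane-wave expansion and the far-field asymptotics of $h_\ell$). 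Since $\{G_\ell(\hat x\cdot d)\}_{\ell\ge 0}$ are linearly independent on $\mathbb{S}^{N-1}$, the equality of the two patterns forces $\Gamma_\ell(n_1,R,k)=\Gamma_\ell(n_2,R,k)$ for every $\ell\ge 0$.

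\emph{Step 3 (the index) and the main obstacle.} It remains to deduce $n_1^2=n_2^2$ from this system with $R,k$ fixed, which I would obtain from the behaviour of $\Gamma_\ell$ as $\ell\to\infty$ with $kR$ fixed. Using $J_\nu(r)\sim(r/2)^\nu/\Gamma(\nu+1)$ and the resulting two-term expansions of the logarithmic derivatives, $j_\ell'(r)/j_\ell(r)=\ell/r-r/(2\ell)+O(\ell^{-2})$ and $h_\ell'(r)/h_\ell(r)\sim-\ell/r$, one checks that the $\ell/r$ terms cancel in the numerator, so that $m\,j_\ell'(mkR)/j_\ell(mkR)-j_\ell'(kR)/j_\ell(kR)=kR(1-m^2)(2\ell)^{-1}(1+o(1))$, while the corresponding combination in the denominator is of exact order $\ell$ and independent of $m$ to leading order; hence
\[
\Gamma_\ell(m,R,k)=A_\ell(k,R)\,(1-m^2)\,(1+o(1)),\qquad \ell\to\infty,
\]
with $A_\ell(k,R)$ independent of $m$ and nonzero for all large $\ell$ (explicitly $A_\ell\sim-\tfrac{(kR)^2}{4\ell^2}\,\tfrac{j_\ell(kR)}{h_\ell(kR)}$, nonzero since $j_\ell(kR)\neq 0$ and $h_\ell(kR)\neq 0$ for large $\ell$). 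Dividing $\Gamma_\ell(n_1,R,k)=\Gamma_\ell(n_2,R,k)$ by $A_\ell(k,R)$ and letting $\ell\to\infty$ gives $1-n_1^2=1-n_2^2$ (legitimate as $n_2^2\neq 1$), i.e. $n_1^2=n_2^2$; since the scattering model depends on the index only through $n^2$, the two media coincide, a contradiction. The crux of the whole argument is exactly this asymptotic analysis: one must push the small-argument expansions of $j_\ell'/j_\ell$ and $h_\ell'/h_\ell$ far enough to see simultaneously that the leading term of $\Gamma_\ell$ is genuinely proportional to the contrast $1-m^2$ (the $\ell/r$ parts cancelling) and that its $m$-independent prefactor $A_\ell(k,R)$ does not vanish, so that $\Gamma_\ell(n_1,R,k)/\Gamma_\ell(n_2,R,k)\to(1-n_1^2)/(1-n_2^2)$; by comparison, Steps~1 and~2 are routine (Rellich, Holmgren and unique continuation, and the classical series for a ball).
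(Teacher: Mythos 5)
The decisive flaw is in your Step 1, at the sentence ``so Holmgren's theorem forces $u_1=\widetilde u_1$ in a one-sided neighbourhood inside $B_{R_1}(z_1)$.'' Holmgren (or any unique continuation principle) applies to the difference of two solutions of the \emph{same} equation; here $u_1$ solves $\Delta u_1+k^2n_1^2u_1=0$ while $\widetilde u_1$ solves $\Delta\widetilde u_1+k^2\widetilde u_1=0$, so $v:=u_1-\widetilde u_1$ satisfies the \emph{inhomogeneous} equation $\Delta v+k^2n_1^2v=k^2(1-n_1^2)\widetilde u_1$ on the interior side, and vanishing Cauchy data on the boundary patch yields nothing. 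That the inference ``equal Cauchy data on a patch of the interface $\Rightarrow$ equality on the interior side'' is actually false (not merely unjustified) can be seen by running it on a single penetrable ball: the exterior total field $u^i+u^s$ of a ball with constant index extends across $\partial B_R(z)$ as a solution of the free Helmholtz equation in $\R^N\setminus\{z\}$ (this is exactly what the paper's Mie-series estimate shows), and by the transmission conditions in \eqref{eq:trans1} it has the same Cauchy data as the interior field on the whole sphere; your inference would then give $(n^2-1)(u^i+u^s)=0$ just inside the boundary, hence $u^i+u^s\equiv0$ on an open set and, by analyticity, in all of $\R^N\setminus\{z\}$ --- i.e.\ no ball with $n^2\neq1$ ever scatters a plane wave, which is absurd. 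Matching Cauchy data of a free solution and of a solution of the perturbed equation on (part of) the interface is precisely the non-scattering/interior-transmission configuration, and it cannot be excluded by a purely local argument; this is why the paper's proof spends its main effort on the global continuation of $u^s$ into $\R^N\setminus\{z\}$ via the explicit series (an entire radiating solution must vanish), and even that argument only yields coincidence of the \emph{centers}, not of the radii. So the part you describe as ``routine'' is exactly where the real difficulty sits, and as written your proof of $B_{R_1}(z_1)=B_{R_2}(z_2)$ fails.

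Your Steps 2--3, by contrast, are essentially correct and take a genuinely different route from the paper: after the center is fixed, the paper observes that the far field depends only on $\hat x\cdot d$, upgrades one incident direction to all directions by rotational symmetry, and then cites the classical all-directions uniqueness for the inverse medium problem (Chapter 10.2 of Colton--Kress in 3D, Bukhgeim's result in 2D), whereas you extract $n^2$ directly from the large-$\ell$ asymptotics of the Mie coefficients $\Gamma_\ell$, whose leading term is proportional to the contrast $1-m^2$ with a nonzero $m$-independent factor; your computation of that asymptotics is right and is more self-contained than the paper's citation. A workable repair is therefore: prove $z_1=z_2$ by the paper's series-continuation argument, and then extend your asymptotic step to possibly different radii --- with a common center, equality of the patterns gives $\Gamma_\ell(n_1,R_1,k)=\Gamma_\ell(n_2,R_2,k)$ for all $\ell$, and since $\Gamma_\ell(m,R,k)$ behaves like a nonzero constant times $(kR)^{2\ell+3}(1-m^2)/\bigl(\ell^2(2\ell+1)!!(2\ell-1)!!\bigr)$, unequal radii are incompatible as $\ell\to\infty$, after which your Step 3 gives $n_1^2=n_2^2$. (Note also that, like the paper's own proof, this determines $n^2$ rather than $n$; some normalization of the sign of $n$ is implicit in the statement.)
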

\begin{proof}
We carry out the proof following the argument in \cite[Theorem 5.4]{CK} for the unique determination of sound-soft balls.  Let $(\tilde{\Omega}, \tilde{n})$ be another spherically symmetric medium with the constant refractive index $\tilde{n}$, where $\tilde{\Omega}=B_{\tilde{R}}(\tilde{z})$. Denote by $\tilde{u}_\infty(\hat{x}; d, k)$ the far-field pattern corresponding to $\tilde{\Omega}$ incited by the plane wave $u^{i}=\exp(ikx\cdot d)$. Assuming $\tilde{u}_\infty(\hat{x}; d, k)=u_\infty(\hat{x}; d, k)$ for all $\hat{x}\in \mathbb{S}^{N-1}$ ($N=2,3$), we shall prove $z=\tilde{z}$, $R=\tilde{R}$ and $n=\tilde{n}$. By Rellich' s Lemma, the scattered waves $u^{s}(x; B_R(z))$ and $\tilde{u}^{s}(x,B_{\tilde{R}}(\tilde{z}))$ coincide in $\R^N\backslash\overline{B_R(z)\cup B_{\tilde{R}}(\tilde{z}) }$.

We first
prove  $z=\tilde{z}$, i.e., the centers of $\Omega$ and $\tilde{\Omega}$ coincide.
Similar to the proof of \cite[Theorem 5.4]{CK}, we claim that the scattered field $u^{s}(x; B_R(z))$ can be analytically extended from $\R^3\backslash\overline{\Omega}$ into $\R^3\backslash\{z\}$. In fact, this can be derived from the explicit expression of $u^{s}$ in terms of $k, n, R$ and $z$. Without loss of generality we suppose $N=3$. We make an ansatz on the scattered field $u^{s}(x; B_R(z))$ and the transmitted wave $u(x; B_R(z))$ that
\be\label{ansatz}\begin{split}
u^{s}(x; B_R(z))&=&\sum_{m=0}^\infty i^m (2m+1)\,A_m\,h_m^{(1)}(k|x-z|) P_m(\cos\theta),\quad A_m\in \C,\quad |x|>R,\\
u(x; B_R(z))&=& \sum_{m=0}^\infty i^m (2m+1)\,B_m\,j_m(kn|x-z|) P_m(\cos\theta),\quad B_m\in \C,\quad|x|<R,
\end{split}
\en
where $h_m^{(1)}$ denotes the Hankel function of the first kind of order $m$, $j_m$ the Bessel functions of order $m$, $P_m$ the Legendre polynomials and $\theta$ the angle between $d$ and $x-z$. Recalling the Jacobi-Anger expansion (see e.g., \cite[(2.46)]{CK})
\ben
e^{ik x\cdot d}=e^{ikz\cdot d}e^{ik (x-z)\cdot d}=e^{ikz\cdot d}\sum_{m=0}^\infty i^m (2m+1)\,j_m(k|x-z|)\,P_m(\cos\theta),\quad x\in \R^3,
\enn
and taking into account the transmission conditions between $u$ and $u^{s}$ on $|x|=R$, we obtain the following algebraic
equations for $A_m$ and $B_m$:
\ben
\begin{pmatrix}
h_m^{(1)}(t) & -j_m(tn)\\
t\,h_m^{(1)}\!'(t) & -tn\,j_m'(tn)
\end{pmatrix}\begin{pmatrix}
A_m \\ B_m
\end{pmatrix}=-e^{ikz\cdot d}
\begin{pmatrix}
j_m(t) \\ t j_m'(t)
\end{pmatrix},\quad t:=kR.
\enn
Simple calculations show that
\ben
A_n=-e^{ikz\cdot d}\frac{j_m'(t)\, j_m(tn)-n j_m(t)\,j_m'(tn) }{h_m^{(1)}\,'(t)\, j_m(tn)-n\, h_m^{(1)}(t)\,j_m'(tn)}.
\enn
By the asymptotic behavior of the spherical Bessel and Hankel functions as $n\rightarrow \infty$ and their differential formulas (see, e.g., \cite[Chapter 2.4]{CK}), we have
 \ben
  j_m(t)&=&\frac{t^m}{(2m+1)!!}\left(1+\mathcal{O}(\frac{1}{m})\right),\quad j_m'(t)=\frac{nt^{m-1}}{(2m+1)!!}\left(1+\mathcal{O}(\frac{1}{m})\right),\\
     h^{(1)}_m(t)&=&\frac{(2m-1)!!}{i\, t^{m+1}}\left(1+\mathcal{O}(\frac{1}{m})\right),\quad
   h^{(1)}_m\,\!'(t)=-\frac{(m+1)\,(2m-1)!!}{i\,  t^{m+2}}\left(1+\mathcal{O}(\frac{1}{m})\right),
 \enn where $m!!=1\cdot3\cdot5\cdots m$ for any odd number $m\in \N^+$.
Inserting the previous asympotics into the expression of $A_n$ leads to the estimate
\ben
A_m=\mathcal{O}\left(\frac{t^{2m+1}}{(2m+1)!!\;(2m+1)!!}\right)\quad\mbox{as}\quad m\rightarrow\infty,
\enn
and thus
\ben
(2m+1)A_m h_m^{(1)}(k|x-z|)=\mathcal{O}\left(\frac{R^{2m+1}k^m}{|x-z|^{m+1}(2m+1)!!}\right).
\enn
This implies that the scattered field $u^{s}(x,B_R(z))$ converges uniformly in any compact subset of $\R^3\backslash\{z\}$. Analogously,
$\tilde{u}^{s}(x,B_{\tilde{R}}(\tilde{z}))$ has an extension from $\R^3\backslash\overline{B_{\tilde{R}}(\tilde{z})}$ into
$\R^3\backslash\{\tilde{z}\}$. Now, we assume $z\neq\tilde{z}$. Defining $u^{s}(x,B_R(z))|_{x=z}:= \tilde{u}^{s}(z,B_{\tilde{R}}(\tilde{z}))$, we obtain an entire function $u^{s}(x,B_R(z))$ that satisfies the Helmholtz equation and the Sommerfeld radiation condition, leading to $u^{s}(x,B_R(z))=0$ in $\R^3$. This contradiction indicate that one must have $z=\tilde{z}$.

From the series (\ref{ansatz}), one readily concludes that $u_{\infty}$ and  $\tilde{u}_{\infty}$ depend only on the angle $\theta$ between the incident direction $d$ and the observation direction $(x-z)/|x-z|$. Hence, the relation  $\tilde{u}_\infty(\hat{x}; d, k)=u_\infty(\hat{x}; d, k)$ for one incident direction implies the coincidence of far-field patterns for all incident directions. Consequently, we have $R=\tilde{R}$ and $n=\tilde{n}$ due to the uniqueness in the inverse medium problem using all incident and observation directions; see e.g.,\cite[Chapter 10.2]{CK} where the refractive index is allowed to be piecewise continuous. This completes the proof in 3D.

For the two-dimensional case, the determination of the center of the disc $B_R(z)$ can be shown in a completely manner to the 3D case. As soon as the center of $B_R(z)$ is recovered, the uniqueness in determining $R$ and $n$ directly follows from the uniqueness in determining a potential for the 2D Schr\"odinger equation in \cite{B}; see also \cite{BL} and \cite{OYa}. 

The proof is completed.

\end{proof}


By the invariance of the Helmholtz equation under rotations, it is easily verified that for an inhomogeneous medium $(B_R(z), n(|x-z|))$, knowing the scattering amplitude for a fixed incident direction is equivalent to knowing the scattering amplitude for all incident directions. Hence, by a completely similar argument to the proof of Theorem~\ref{Th}, we have

\begin{thm}\label{thm:s1}
Let $\Omega:=B_R(z)$ be a ball of radius $R>0$ centered at $z$ in $\mathbb{R}^N$. Let $(\Omega, n)$ be an inhomogeneous medium with the refractive index $n:=n(|x-z|)\in L^\infty(\Omega)$ being a complex-valued function of $|x-z|$. It is supposed that the center $z$ is known in advance. Then the refractive index $n$ can be uniquely determined by the associated scattering amplitude $u_\infty(\hat x; d, k)$ with any fixed $k\in\mathbb{R}_+$, $d\in\mathbb{S}^{N-1}$ and all $\hat{x}\in\mathbb{S}^{N-1}$.
\end{thm}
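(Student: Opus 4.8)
The plan is to reduce Theorem~\ref{thm:s1} to Theorem~\ref{Th} by exploiting the rotational invariance of the Helmholtz equation. The key observation is already recorded in the paragraph preceding the statement: when the refractive index is a radial function $n(|x-z|)$ about a fixed, \emph{known} center $z$, the scattering operator commutes with rotations about $z$. Concretely, if $u^s(x;d)$ solves the scattering problem \eqref{eq:Helm}--\eqref{eq:radiation} for incident direction $d$, then for any rotation $\rho\in SO(N)$ fixing $z$, the function $x\mapsto u^s(\rho^{-1}(x-z)+z; \rho d)$ solves the same problem for incident direction $\rho d$; hence $u_\infty(\rho\hat x;\rho d,k)=u_\infty(\hat x; d,k)$, and knowing the far-field for one $d$ and all $\hat x$ is equivalent to knowing it for all $(\hat x,d)$.

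First I would make this reduction precise: given two radial media $(\Omega,n)$ and $(\Omega,\tilde n)$ with the same \emph{known} center $z$ and the same $R$ (one checks $R$ is determined first, exactly as in Theorem~\ref{Th} — the scattered field extends analytically off $z$, and two different radii would force the scattered field to be entire hence zero; alternatively one reads off $R$ from the support of the far-field operator's singularities, but the cleanest route is to invoke the argument of \cite[Theorem 5.4]{CK} verbatim as in the proof of Theorem~\ref{Th}). With $z$ and $R$ in hand, suppose $u_\infty(\hat x;d,k)=\tilde u_\infty(\hat x;d,k)$ for all $\hat x$ and the fixed $d$. By the rotational-invariance identity above applied to both media, $u_\infty(\hat x;d',k)=\tilde u_\infty(\hat x;d',k)$ for \emph{all} incident directions $d'\in\mathbb S^{N-1}$ and all $\hat x\in\mathbb S^{N-1}$ (at the fixed $k$).

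Next I would invoke a uniqueness result for the inverse medium problem with full aperture data at a fixed wave number: $u_\infty(\hat x;d,k)$ for all $(\hat x,d)$ and one fixed $k$ determines $n$ — in the form that allows $n$ to be merely $L^\infty$. In $N=3$ (and $N\geq 3$) this is Sylvester--Uhlmann as cited, and in the piecewise-continuous/$L^\infty$ generality needed here it is \cite[Chapter 10.2]{CK}; in $N=2$ it is the result of \cite{B} (see also \cite{BL,OYa}). Since the present index is radial, one could even bypass the general theory entirely: separating variables in $B_R(z)$, the far-field data at fixed $k$ and all $(\hat x,d)$ determines, for every angular mode $m$, the scattering coefficient, hence the Dirichlet-to-Neumann map on $\partial B_R$ for the radial ODE $r^{-(N-1)}(r^{N-1}w')'+(k^2n(r)^2-\lambda_m r^{-2})w=0$ at the countably many $\lambda_m$, which is enough to recover the radial profile $n$ by a one-dimensional inverse spectral / Borg--Marchenko-type argument. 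Either way, one concludes $n=\tilde n$, completing the proof.

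The main obstacle is not the rotational reduction, which is routine, but making sure the fixed–wave-number, full-aperture uniqueness statement one cites genuinely covers complex-valued $L^\infty$ coefficients with a possibly sign-changing real part of $n^2-1$ — the cited theorems in \cite{CK,B} are stated for real index or under mild regularity, so the safest path in the write-up is to lean on the radial structure and the one-dimensional inverse-spectral argument rather than the general three-dimensional theory. A secondary point to handle carefully is the low-frequency degeneracy: if $k$ happens to be an interior transmission eigenvalue the separated system may have nontrivial kernel, but since the data is given for a \emph{fixed} $k$ and the far-field still determines every modal scattering coefficient, this does not actually obstruct the argument — it only means one should phrase the recovery in terms of the far-field coefficients directly rather than in terms of invertibility of a boundary operator.
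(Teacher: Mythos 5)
Your proposal follows essentially the same route as the paper: use rotational invariance about the known center $z$ to upgrade the fixed-incident-direction far-field data to data for all incident directions at the fixed $k$, and then invoke the full-aperture fixed-frequency uniqueness for the inverse medium problem (\cite[Chapter 10.2]{CK} in 3D, \cite{B,BL,OYa} in 2D), exactly as in the paper's appeal to ``a completely similar argument to the proof of Theorem~\ref{Th}.'' Your additional remarks --- the caution about whether the cited results cover complex-valued $L^\infty$ indices and the optional radial separation-of-variables/inverse-spectral fallback --- go beyond the paper's own (very terse) justification but do not change the approach.
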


\section{Concluding remarks}\label{sect:remark}

One of the classical inverse scattering problems is the so-called {\it obstacle problem}. Let $D$ be a bounded Lipschitz domain such that $\mathbb{R}^N\backslash\overline{D}$ is connected. Consider the following scattering problem
\begin{equation}\label{eq:dd1}
\begin{cases}
\Delta u+k^2 u=0\quad &\mbox{in\ \ $\mathbb{R}^N\backslash\overline{D}$},\\
u=u^i+u^s\quad &\mbox{in\ \ $\mathbb{R}^N\backslash\overline{D}$},\\
u=0\quad &\mbox{on\ \ $\partial D$},\\
\lim_{|x|\rightarrow \infty} |x|^{\frac{N-1}{2}}\left\{ \frac{\partial u^s(x)}{\partial |x|}-ik u^s(x) \right\}=0.
\end{cases}
\end{equation}
The scattered field in \eqref{eq:dd1} possesses the same asymptotic expansion as that in \eqref{eq:farfield}. The inverse obstacle scattering problem is to recover $D$ by knowledge of $u_\infty(\hat x; d, k)$. $D$ is known as a sound-soft obstacle in the physical literature. It has been long conjectured that $D$ can be uniquely determined by a single far-field measurement, namely $u_\infty(\hat x; d, k)$ for all $\hat x\in\mathbb{S}^{N-1}$, but fixed $k\in\mathbb{R}_+$ and $d\in\mathbb{S}^{N-1}$. We note that the inverse obstacle problem is formally posed by a single far-field measurement. The first uniqueness result is due to Schiffer (cf. \cite{Lax}), where the uniqueness was established by infinitely many far-field measurements, namely $u_\infty(\hat x; d, k)$ with all $\hat x\in\mathbb{S}^{N-1}$, and either i).~a fixed $d\in\mathbb{S}^{N-1}$ and infinitely many $k$'s; or ii).~a fixed $k\in\mathbb{R}_+$ and infinitely many $d$'s. The corresponding proof is based on an absurdity argument, and the essential ingredient is the discreteness of the Dirichlet eigenvalues for the negative Laplacian in a bounded domain. In this sense, our argument in Section~\ref{sect:2} on uniquely determining a generic inhomogeneous medium can be taken as a counterpart to Schiffer's argument on uniquely determining a generic sound-soft obstacle. However, in order to establish the uniqueness in determining a generic inhomogeneous medium, one has to deal with the more challenging nonlinear and non-selfadjoint interior transmission eigenvalue problems \eqref{eq:itep}. We conjecture that a generic refractive index $n(x)$ can be uniquely determined the formally-determined fixed-incident-direction scattering amplitude. A possible way to achieve this goal is to study the discreteness of the interior transmission eigenvalues of \eqref{eq:itep}, but one needs to peel off the `singular regions' where $n_1=n_2$ from exterior. We shall present such a study in the future.

By using the fact that there exists a lower bound on the Dirichlet eigenvalues, Colton and Sleeman established the uniqueness with a single far-field measurement for the inverse obstacle problem provided the obstacle is sufficiently small (see \cite{CS}). Our uniqueness in Section~\ref{sect:3} on uniquely determining a constant refractive index by a single far-field measurement can be taken as a counterpart to that uniqueness due to Colton and Sleeman. In order to prove Theorem~\ref{thm:onem}, we actually have shown that there is a lower bound for the positive interior transmission eigenvalues of \eqref{eq:bb}. The lower bound of positive interior transmission eigenvalues was also considered in \cite{CG}. However, our approach works in a more general setting than that was considered in \cite{CG}. Finally,  we note that there are some significant progresses in uniquely determining a generic obstacle by using a single far-field measurement even without the smallness condition; see \cite{AL,CY,LiuZou06,LZ} and the references therein. We believe that a generic constant refractive index can also be uniquely determined by a single far-field measurement without the smallness condition posed in Theorem~\ref{thm:onem}. Indeed, Theorems~\ref{Th} and \ref{thm:s1} cast some light on this conjecture.

\section*{Acknowledgement}
The first author gratefully acknowledges  the support by the  German Research Foundation (DFG) under Grant No. HU 2111/1-1.
The work of Hongyu Liu was supported by NSF grant, DMS 1207784.

\end{document}